\documentclass[12pt,oneside]{amsart}

\usepackage[margin=1.1in]{geometry}
\usepackage{amsmath,amsfonts,amssymb,amsthm,mathtools}
\usepackage{microtype}
\usepackage[colorlinks=true,linkcolor=blue,citecolor=blue,urlcolor=blue]{hyperref}
\usepackage{graphicx}
\numberwithin{equation}{section}

\newtheorem{theorem}{Theorem}[section]
\newtheorem{proposition}[theorem]{Proposition}
\newtheorem{lemma}[theorem]{Lemma}
\newtheorem{corollary}[theorem]{Corollary}
\newtheorem{definition}[theorem]{Definition}

\newcommand{\R}{\mathbb R}
\newcommand{\Hq}{\mathbb H}
\newcommand{\SpOne}{\operatorname{Sp}(1)}
\newcommand{\SO}{\mathrm{SO}}
\newcommand{\SU}{\mathrm{SU}}
\newcommand{\wt}{\operatorname{wt}}
\newcommand{\len}[1]{|#1|}
\newcommand{\ii}{\mathbf i}
\newcommand{\jj}{\mathbf j}
\newcommand{\kk}{\mathbf k}

\title[Almost laws for $\SO(3)$]{Improved Almost laws for $\SO(3)$}
\author{Gal Yehuda}
\address{Department of Mathematics, Yale University, New Haven, CT, USA}
\email{gal.yehuda@yale.edu}
\keywords{almost laws, word maps, rotations, Solovay--Kitaev theorem}
\date{}

\begin{document}

\begin{abstract}
We construct quantitative almost laws for $\SO(3)$.  More precisely, there
exist a constant $c>0$ and non-trivial words $W_n\in F_2$ such that, for every
$A,B\in \SO(3)$,
\[
  \|W_n(A,B)-I\|
  \le \exp\!\left(-c |W_n|^{\delta}\right),
\]
where $\delta=\log_2(x_0)=0.879146\ldots$ and $x_0>1$ is the real root of
$x^3=x^2+x+1$.  This improves the exponent $\log_2\varphi$ obtained from
Elkasapy's lower-central-series construction.
As an application, we show how this result improves the
word-length threshold in Kuperberg's Solovay--Kitaev algorithm for
single-qubit gates.
\end{abstract}

\maketitle

\section{Introduction}
Let $F_2=\langle a,b\rangle$ be the free group on two generators.  
If $G$ is a compact group equipped with a bi-invariant metric $d_G$, a non-trivial word $w\in F_2$ is called an $\varepsilon$-almost law for $G$ if
\[
  d_G\bigl(w(g,h),1_G\bigr)\le \varepsilon
  \qquad\text{for all } g,h\in G.
\]
The quantitative problem is to estimate the shortest possible length as
$\varepsilon\to0$.  Thom proved the qualitative existence of almost laws
\cite{Thom2010}; they also play a role in the work of Chen--Hurtado--Lee
\cite{ChenHurtadoLee2021}.  Quantitative constructions arise from short words
deep in the lower central series \cite{ElkasapyThom2013,Elkasapy2016}.
Elkasapy constructed words in $\gamma_n(F_2)$ of length
$O(n^{\log_\varphi 2})$, which yield the exponent
$\log_2\varphi=0.694\ldots$ for compact Lie groups.  We improve this exponent
for $\SO(3)$ by using its rank-one quaternionic geometry.

\subsection{Main result}

We equip $\SO(3)$ with the operator-norm metric inherited from $M_3(\mathbb R)$.

\begin{theorem}\label{thm:global}
There are constants $c,C>0$ and non-trivial words $W_n\in F_2$ such that
\[
  |W_n|\le C2^n
\]
and, for every pair of rotations $A,B\in \SO(3)$,
\[
  \|W_n(A,B)-I\|
  \le
  \exp\!\left(-c\,|W_n|^{\delta}\right),
\]
where $\delta = \log_2(x_0)$ and $x_0>1$ is the real root of
$x^3 = x^2 + x + 1$.

Consequently, if $L_{\SO(3)}(\varepsilon)$ denotes the least length of an $\varepsilon$-almost law for $\SO(3)$, then
\[
  L_{\SO(3)}(\varepsilon)
  \ll
  \bigl(\log(1/\varepsilon)\bigr)^{1/\delta}.
\]
\end{theorem}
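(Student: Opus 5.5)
The plan is to pass to the double cover $\SpOne\cong\SU(2)$ of unit quaternions and run a three-term recursion in which word length doubles at each step while the smallness exponent obeys the tribonacci recursion. Then $|W_n|\asymp 2^n$ while $-\log\|W_n-I\|\gtrsim x_0^n=2^{n\delta}$, which is exactly the claimed bound. Since conjugation $\SpOne\to\SO(3)$ is Lipschitz and $\|W(A,B)-I\|$ is controlled by $\min_\pm|W(\tilde A,\tilde B)\mp1|$ for lifts $\tilde A,\tilde B$, it suffices to prove the estimate for quaternion words. The basic tool is the exact quaternion commutator identity $[g,h]-1=(gh-hg)g^{-1}h^{-1}$. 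Writing $g=1+u$ and $h=1+v$, it gives $|[g,h]-1|=|uv-vu|=2|\operatorname{Im}u\times\operatorname{Im}v|$. This is the rank-one feature: the commutator is small not only when $g,h$ are near $1$, but also in proportion to the sine of the angle between their axes.

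The recursion will track pairs of words $(U_n,V_n)$ of equal length $\asymp 2^n$, with three quantities attached to them: sizes $|U_n-1|,|V_n-1|\le \eta^{a_n}$, and an alignment defect $|U_nV_n^{-1}-1|\le\eta^{b_n}$ measuring how close $U_n$ and $V_n$ are. The step is $U_{n+1}=[U_n,V_n]$, with $V_{n+1}$ a second commutator of comparable length, for instance $[V_n,U_n']$ where $U_n'$ is a conjugate or shifted copy, chosen so that $V_{n+1}$ is again aligned with $U_{n+1}$. The cross-product formula gives $|U_{n+1}-1|\lesssim |U_n-1|\cdot|U_nV_n^{-1}-1|$, because $\operatorname{Im}u\times\operatorname{Im}v=\operatorname{Im}u\times\operatorname{Im}(v-u)$. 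This yields $a_{n+1}\approx a_n+b_n$. The alignment defect of the new pair should satisfy $b_{n+1}\approx a_n+b_{n-1}$-type relations, so that eliminating $b$ yields $a_{n+1}=a_n+a_{n-1}+a_{n-2}$. I will make this concrete by expanding to second order in $\operatorname{Im}\log$, using the BCH expansion in $\mathfrak{su}(2)\cong(\R^3,\times)$ together with the Jacobi identity. The lengths satisfy $\ell_{n+1}=4\ell_n$ for the pair if I am careless. I will therefore define $V_{n+1}$ so that it shares most of its letters with $U_{n+1}$, e.g. $V_{n+1}=U_{n+1}$ conjugated by a short word, or the cyclic rotation $[V_n,U_n]^{-1}$-style variant, keeping the total growth factor $2$ per step up to additive constants, hence $|W_n|\le C2^n$.

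For the base case, the estimates must hold for all $A,B$, not just near the identity. I will use Thom's qualitative theorem \cite{Thom2010} to fix a non-trivial word $w_0$ with $|w_0(\tilde A,\tilde B)\mp1|\le\eta$ uniformly, for a small absolute $\eta$. I then seed the recursion with $U_0=w_0(a,b)$ and $V_0=w_0(a,b)$ evaluated at a different pair, for example $w_0(a,bab^{-1})$, so that the initial alignment defect is also $\le 2\eta$. All error terms in the BCH step are of relative size $O(\eta)$, so with $\eta$ small the recursion gives $|U_n-1|\le \eta^{c' x_0^n}$.

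Non-triviality of $W_n=U_n$ in $F_2$ must be checked separately. I plan to do this by showing inductively that $U_n,V_n$ are non-commuting, i.e. generate a free subgroup, using a ping-pong or a specialization into a free matrix group where $w_0$ is non-trivial. The main obstacle is the alignment step: designing $V_{n+1}$ so that both the defect satisfies a recursion producing tribonacci growth and the length stays $2\ell_n+O(1)$. My first attempt is $V_{n+1}=[V_n,U_n]$ with $U_{n+1}=[U_n,V_n]$ and a shifted indexing, where the defect $U_{n+1}V_{n+1}^{-1}$ is a product of two commutators whose linear parts cancel. I would then verify via Jacobi that the leading term is a double cross product of three consecutive-level quantities.
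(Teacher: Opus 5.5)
There is a genuine gap. The step that is supposed to produce the exponent $\delta$ is never constructed, and the commutator scheme you outline cannot produce it.

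Your only explicit candidate fails at once. With $U_{n+1}=[U_n,V_n]$ and $V_{n+1}=[V_n,U_n]$ you get $V_{n+1}=U_{n+1}^{-1}$. The new alignment defect $U_{n+1}V_{n+1}^{-1}=U_{n+1}^2$ is then about twice $|U_{n+1}-1|$, not smaller. Worse, $U_{n+2}=[U_{n+1},U_{n+1}^{-1}]=1$ is the trivial word, so your non-commutation induction fails at the first step.

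More fundamentally, the length count is wrong. $[U_n,V_n]$ already contains four copies of words of length $\ell_n$, so $|U_{n+1}|\approx 4\ell_n$ however cheaply you build $V_{n+1}$. Hence $|W_n|\le C2^n$ cannot hold for a commutator recursion of this shape. With one commutator per step you would need the smallness exponent to grow by about $x_0^2\approx 3.38$ per step, i.e. a defect exponent $b_n\approx (x_0^2-1)a_n$, and nothing in the sketch supplies that. Your heuristic relations $a_{n+1}=a_n+b_n$ and $b_{n+1}=a_n+b_{n-1}$ eliminate to $a_{n+2}=a_{n+1}+2a_n-a_{n-1}$. Its growth rate is $2\cos(\pi/7)\approx 1.802$, not the tribonacci root $x_0\approx 1.839$. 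With length factor $4$ per step this gives an exponent near $0.42$.

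The missing idea is to get commutator-like cancellation at the cost of only \emph{two} copies per step.

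\textbf{Substitution and normal form.} The paper substitutes $p=a$, $q=bab^{-1}$. A conjugate pair can be put in the normal form $p=c+s\ii+t\jj$, $q=c+s\ii-t\jj$, with $s=O(\varepsilon)$ and $t=O(\varepsilon^2)$.

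\textbf{Half-turn automorphisms.} On this family the automorphisms $\tau:p\leftrightarrow q$ and $\rho:p\mapsto q^{-1},\ q\mapsto p^{-1}$ of $\langle p,q\rangle$ are realized by conjugation by $\ii$ and by $\jj$. The step is the product $u_{n+1}=u_n\tau(u_n)$ or $u_n\rho(u_n)$, not a commutator. Since $\operatorname{Im}\bigl((r+v)(r+v')\bigr)=r(v+v')+v\times v'$, the coordinates reversed by the half-turn cancel in $v+v'$ and reappear only through the cross product.

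\textbf{Bookkeeping.} You must track the three coordinates separately, with weights $\wt(s)=1$ and $\wt(t)=2$; a single size and a single defect are not enough. The valuation triple evolves alternately by $(A,B,C)\mapsto(A,A+C,A+B)$ and $(A,B,C)\mapsto(B+C,B,A+B)$. Starting from the seed $u_0=p^2q^{-1}p^{-1}$, with triple $(5,2,3)$, the minimum is $d_n$ with $d_{n+3}=d_{n+2}+d_{n+1}+d_n$, while the length is $O(2^n)$.

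Two such steps use four copies and asymptotically gain the factor $x_0^2$ your commutator would need. On evaluations they give $u\cdot \ii u\ii^{-1}\cdot \jj u\jj^{-1}\cdot \kk u\kk^{-1}$. It is this product of half-turn conjugates, not $UVU^{-1}V^{-1}$, that works.

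Your reduction to a neighborhood of the identity via a Thom almost law matches the paper's. Two further points still need work:
\begin{itemize}
\item You need the two seeds to generate a free group of rank two. The paper arranges this by plugging $v$ into the disjoint free factors $\langle b,aba^{-1}\rangle$ and $\langle a^2ba^{-2},a^3ba^{-3}\rangle$.
\item You need error bounds uniform in $n$. A relative error $O(\eta)$ per step compounds over $n$ steps; the paper controls this with explicit $K^{\alpha_n}$ bookkeeping.
\end{itemize}
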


\subsection{Outline of the construction}

The proof has a local and a global step.  For the local construction,
introduce free generators $p,q$ and set
\[
  u_0=p^2q^{-1}p^{-1}.
\]
Let $\tau$ and $\rho$ be the automorphisms defined by
\[
  \tau(p)=q,\qquad \tau(q)=p,
  \qquad
  \rho(p)=q^{-1},\qquad \rho(q)=p^{-1}.
\]
We define
\[
  u_{n+1}=
  \begin{cases}
    u_n\tau(u_n),& n\text{ even},\\
    u_n\rho(u_n),& n\text{ odd},
  \end{cases}
\]
and obtain words in the original generators by the substitution
\[
  w_n(a,b)=u_n(a,bab^{-1}).
\]
Each step uses two copies of the preceding word, and hence
$|w_n|=O(2^n)$.

To pass to arbitrary rotations, choose $\eta_0>0$ so that the local
estimate holds in the $\eta_0$-neighborhood of the identity, and fix a
non-trivial $\eta_0$-almost law $v$.  Put
\[
  b_k=a^kba^{-k},\qquad
  X=v(b_0,b_1),\qquad
  Y=v(b_2,b_3),
\]
and define
\[
  W_n=w_n(X,Y).
\]
For every $A,B\in\SO(3)$, both $X(A,B)$ and $Y(A,B)$ lie in the
neighborhood where the local estimate applies.  The elements
$b_0,b_1,b_2,b_3$ freely generate a free group, so $X$ and $Y$ freely generate a
rank-two free subgroup; in particular, $W_n$ is non-trivial.  Since $v$
is fixed, this substitution changes the word length only by a constant
factor.

\subsection{Geometric viewpoint}
It easier to understand the geometric idea of the construction when working with the unit quaternions. 
Write a quaternion as $r+x\ii+y\jj+z\kk$, where
$\ii^2=\jj^2=\kk^2=\ii\jj\kk=-1$.  The unit quaternions
\[
  \SpOne=\{q\in \Hq: |q|=1\}\simeq \SU(2)
\]
double-cover $\SO(3)$ by the action $v\mapsto qvq^{-1}$ on
$\operatorname{Im}\Hq\simeq \R^3$ \cite{Hall2015}.  For imaginary
quaternions $U,V$, multiplication takes the geometric form
\[
  UV=-U\cdot V+U\times V.
\]
Thus $[U,V]=2U\times V$.

The substitution
\[
  p=a,\qquad q=bab^{-1}
\]
produces two rotations with the same angle, whose axes are carried to one
another by $b$.  In the unit quaternions, write
\[
  p=c+P,\qquad q=c+Q,
  \qquad P,Q\in\operatorname{Im}\Hq.
\]
Since $p$ and $q$ are conjugate, $|P|=|Q|$, and therefore
\[
  (P+Q)\cdot(P-Q)=0.
\]
Choosing $\ii$ and $\jj$ in the directions of $P+Q$ and $P-Q$ gives
\[
  p=c+s\ii+t\jj,\qquad
  q=c+s\ii-t\jj,\qquad
  c^2+s^2+t^2=1.
\]
Thus the $\ii$-direction bisects the two axes, while the $\jj$-direction
measures their separation, see Figure \ref{fig:conjugate-pair}.

\begin{figure}[t]
  \centering
  \includegraphics[width=0.85\textwidth]{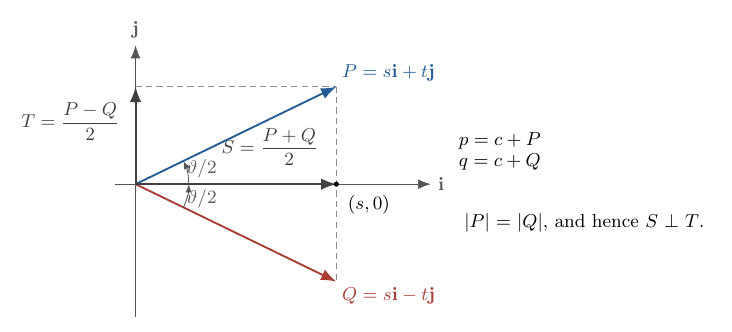}
  \caption{The imaginary parts $P$ and $Q$ of the conjugate pair have
  equal length. Their average $S=(P+Q)/2$ and half-difference
  $T=(P-Q)/2$ are orthogonal, giving the normal form
  $P=s\ii+t\jj$ and $Q=s\ii-t\jj$.}
  \label{fig:conjugate-pair}
\end{figure}

When $a$ and $b$ are close to the identity,
this separation combines the small angle of $a$ with the small tilt
produced by $b$.

In these coordinates, $\tau$ and $\rho$ are induced by half-turns about
the $\ii$- and $\jj$-axes, see Figure \ref{fig:half-turn}.
If $u=r+v$ and $r+v'$ is either half-turned
copy, then
\[
  \operatorname{Im}\bigl((r+v)(r+v')\bigr)
  =r(v+v')+v\times v'.
\]
The two components reversed by the half-turn cancel in $v+v'$ and can
reappear only through the cross product.  Alternating the two half-turns
repeats this cancellation in different coordinate directions.  The
resulting calculation gives the equation
$x^3=x^2+x+1$; since the word length doubles at each step, this is the
geometric origin of the exponent $\log_2(x_0)$.

\begin{figure}[t]
  \centering
  \includegraphics[width=.65\textwidth]{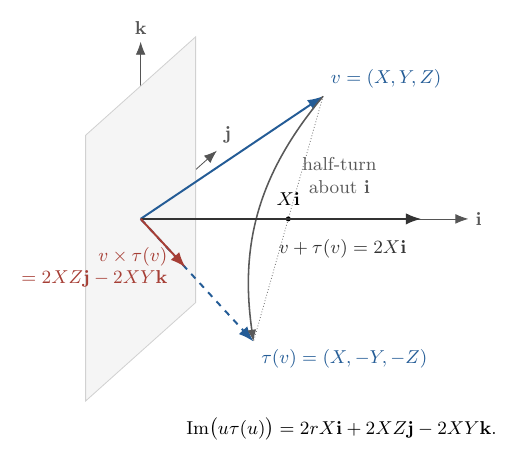}
  \caption{The automorphism $\tau$ acts on the imaginary part as a
  half-turn about the $\ii$-axis. The transverse components cancel in
  $v+\tau(v)$ and reappear only through the cross product, giving
  $\operatorname{Im}(u\tau(u))
  =2rX\ii+2XZ\jj-2XY\kk$.}
  \label{fig:half-turn}
\end{figure}

\section{conjugate-pair normal form}\label{sec:normal-form}

Let $p,q\in \SpOne$ be conjugate and near $1$.  Write
\[
  p=c+u,
  \qquad q=c+v,
\]
with $u,v\in \operatorname{Im}\Hq$.  Since $p$ and $q$ are conjugate, they have the same real part and hence
\[
  |u|=|v|.
\]
Set
\[
  S=\frac{u+v}{2},
  \qquad
  T=\frac{u-v}{2}.
\]
Then
\[
  S\cdot T=\frac{|u|^2-|v|^2}{4}=0.
\]
Thus, after rotating the imaginary quaternion basis, every nearby conjugate pair has the form
\begin{equation}\label{eq:pq-normal}
  p=c+s\ii+t\jj,
  \qquad
  q=c+s\ii-t\jj,
  \qquad
  c=\sqrt{1-s^2-t^2}.
\end{equation}

Now return to the original generators $a,b$ and set
\[
  p=a,
  \qquad q=bab^{-1}.
\]
If
\[
  a=e^A,
  \qquad b=e^B,
\]
with $A,B$ small imaginary quaternions, then
\[
  \operatorname{Im}(p)=A+O\bigl((\|A\|+\|B\|)^3\bigr),
  \qquad
  \operatorname{Im}(q)=A+[B,A]+O\bigl((\|A\|+\|B\|)^3\bigr).
\]
Therefore, in the above decomposition, the average direction satisfies
\[
  S=A+\frac12[B,A]+O\bigl((\|A\|+\|B\|)^3\bigr),
\]
while the difference direction satisfies
\[
  T=-\frac12[B,A]+O\bigl((\|A\|+\|B\|)^3\bigr).
\]
Consequently, if $\varepsilon=\|A\|+\|B\|$, then
\[
  |s|=O(\varepsilon),\qquad |t|=O(\varepsilon^2).
\]
This motivates the following definition.

\begin{definition}
For a monomial in $s,t$, define
\[
  \wt(s^\alpha t^\beta)=\alpha+2\beta.
\]
The weighted valuation of a nonzero formal power series is the least weight
of a monomial that occurs in it; we set $\wt(0)=\infty$.
\end{definition}

For $u=r+X\ii+Y\jj+Z\kk$, its \emph{weighted order} is the
minimum of the weighted valuations of $X,Y,Z$.  This order controls the
ordinary Taylor expansion even though the normalizing frame above depends on
the point.  Indeed, weighted order at least $d$ implies
$\|\operatorname{Im}u\|=O(\varepsilon^d)$, independently of the chosen
orthonormal frame.  Since $u$ is a unit quaternion with scalar part $1$ at
the origin, its scalar displacement is $O(\varepsilon^{2d})$.  Hence
\begin{equation}\label{eq:weighted-to-ordinary}
  u(a,bab^{-1})=1+O(\varepsilon^d).
\end{equation}
As the word map is analytic, all of its Taylor terms of degree less than $d$
vanish.

\section{sign symmetries}\label{sec:symmetries}

Let a word $u(p,q)$ evaluate, in the normal form \eqref{eq:pq-normal}, as
\[
  u=r+X\ii+Y\jj+Z\kk,
\]
where $r,X,Y,Z\in \R[[s,t]]$.
Define
\[
  \nu(u)=\bigl(\wt(X),\wt(Y),\wt(Z)\bigr),
\]

There are two exact involutions of the free group $\langle p,q\rangle$ which act by sign changes on the components.

\begin{lemma}\label{lem:signs}
Let
\[
  \tau:p\leftrightarrow q,
\]
\[
  \rho:p\mapsto q^{-1},\qquad q\mapsto p^{-1}.
\]
Then, in the normal form \eqref{eq:pq-normal},
\[
  \tau(u)=r+X\ii-Y\jj-Z\kk,
  \qquad
  \rho(u)=r-X\ii+Y\jj-Z\kk.
\]
\end{lemma}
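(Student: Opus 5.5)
The plan is to realize each of $\tau$ and $\rho$ as conjugation by a fixed unit quaternion, read off in the normal form \eqref{eq:pq-normal}, and then use that an automorphism of the free group commutes with evaluation. Since $u\mapsto u(p,q)$ is a homomorphism from $\langle p,q\rangle$ into $\SpOne$, it suffices to find, for each involution, a group automorphism $\phi$ of $\SpOne$ that sends the pair $(p,q)$ to the images $(\tau(p),\tau(q))=(q,p)$, respectively $(\rho(p),\rho(q))=(q^{-1},p^{-1})$. Evaluating $\tau(u)$ at $(p,q)$ equals evaluating $u$ at $(q,p)$. So if $\phi(p)=q$ and $\phi(q)=p$, then $\tau(u)(p,q)=u(\phi(p),\phi(q))=\phi(u(p,q))$, and the same argument applies to $\rho$.

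For $\tau$, take $\phi(x)=\ii x\ii^{-1}$. Conjugation by $\ii$ is the half-turn about the $\ii$-axis: it fixes real parts and $\ii$, and negates $\jj$ and $\kk$. Applied to $p=c+s\ii+t\jj$ it gives $c+s\ii-t\jj=q$, and vice versa. Hence $\tau(u)=\ii u\ii^{-1}=r+X\ii-Y\jj-Z\kk$.

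For $\rho$, use conjugation by $\jj$, which fixes $\jj$ and negates $\ii,\kk$. We have $p^{-1}=\bar p=c-s\ii-t\jj$ and $q^{-1}=c-s\ii+t\jj$. Conjugating $p$ by $\jj$ gives $c-s\ii+t\jj=q^{-1}$, and conjugating $q$ by $\jj$ gives $c-s\ii-t\jj=p^{-1}$. So $\phi(x)=\jj x\jj^{-1}$ matches $\rho$ on the generators, and therefore $\rho(u)=\jj u\jj^{-1}=r-X\ii+Y\jj-Z\kk$.

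The only point needing care is the identity $\tau(u)(p,q)=\phi(u(p,q))$. It follows by induction on word length, or simply because both sides are homomorphisms $F(p,q)\to\SpOne$ that agree on $p$ and $q$. The identities hold for every $(s,t)$, so they hold coefficientwise in $\R[[s,t]]$. I expect no real obstacle beyond checking these signs; the one subtle step is that inversion in $\SpOne$ is quaternion conjugation, which makes the $\rho$ case work.
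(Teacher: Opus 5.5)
Your proof is correct and follows the paper's argument. Both realize $\tau$ and $\rho$ as conjugation by $\ii$ and $\jj$ respectively, and read off the resulting sign changes on the $\ii,\jj,\kk$ components; your remark that evaluation intertwines each free-group automorphism with the corresponding inner automorphism of $\SpOne$ (two homomorphisms agreeing on $p,q$) spells out a step the paper leaves implicit.
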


\begin{proof}
Conjugation by $\ii$ sends
\[
  \ii(c+s\ii+t\jj)\ii^{-1}=c+s\ii-t\jj,
\]
so it exchanges $p$ and $q$.  Thus $\tau$ is realized by conjugation by $\ii$, which fixes the $\ii$-component and changes the signs of the $\jj$- and $\kk$-components.  Similarly, conjugation by $\jj$ sends
\[
  p\mapsto q^{-1},\qquad q\mapsto p^{-1}.
\]
This gives the stated sign rules.
\end{proof}

The multiplication rules that drive the construction are now elementary quaternion arithmetic.

\begin{lemma}\label{lem:multiplication}
Let $u=r+X\ii+Y\jj+Z\kk$.  Then
\begin{align*}
  \operatorname{Im}(u\tau(u))
    &=(2rX)\ii+(2XZ)\jj-(2XY)\kk,\\
  \operatorname{Im}(u\rho(u))
    &=(-2YZ)\ii+(2rY)\jj+(2XY)\kk.
\end{align*}
Consequently, if $\nu(u)=(A,B,C)$, then
\[
  \nu(u\tau(u))=(A,A+C,A+B),
  \qquad
  \nu(u\rho(u))=(B+C,B,A+B).
\]
\end{lemma}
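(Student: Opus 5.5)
The plan is to treat the identity and the valuation statement separately. The identity is pure quaternion arithmetic, using the rule $UV=-U\cdot V+U\times V$ for imaginary quaternions recalled in the introduction. The valuation statement follows once we know that $\wt$ is an honest valuation on $\R[[s,t]]$ and that $r$ has weight $0$.

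\textbf{Step 1 (the identity).} Write $u=r+w$ with $w=X\ii+Y\jj+Z\kk$. Write $\tau(u)=r+w'$ and $\rho(u)=r+w''$, where by Lemma~\ref{lem:signs}
\[
w'=X\ii-Y\jj-Z\kk,\qquad w''=-X\ii+Y\jj-Z\kk.
\]
The product of $r+w$ and $r+w'$ is
\[
r^2-w\cdot w'+r(w+w')+w\times w',
\]
so its imaginary part is $r(w+w')+w\times w'$, as noted in the introduction.

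\begin{itemize}
\item For $\tau$: $w+w'=2X\ii$, and expanding the cross product componentwise gives $w\times w'=(0,\,2XZ,\,-2XY)$. This is the first formula.
\item For $\rho$: $w+w''=2Y\jj$ and $w\times w''=(-2YZ,\,0,\,2XY)$. This is the second formula.
\end{itemize}

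This step is routine bookkeeping of signs and involves no analysis.

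\textbf{Step 2 (weights of products).} First, $\wt(fg)=\wt(f)+\wt(g)$ for $f,g\in\R[[s,t]]$. Decompose each series into weighted-homogeneous parts for the grading $\deg s=1$, $\deg t=2$. The lowest part of $fg$ is the product of the lowest parts of $f$ and $g$. These are nonzero polynomials in $\R[s,t]$, which is an integral domain, so their product is nonzero. If either factor is $0$, both sides equal $\infty$. Multiplying by the nonzero constant $\pm2$ does not change the weight.

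Second, $\wt(r)=0$. At $s=t=0$ we have $p=q=1$, so every word evaluates to $1$. Hence $r$ has constant term $1$ and is a unit of weight $0$.

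\textbf{Step 3 (conclusion).} With $\nu(u)=(A,B,C)$, Step 2 reads off the weights directly from the formulas of Step 1:
\[
\wt(2rX)=A,\quad \wt(2XZ)=A+C,\quad \wt(2XY)=A+B,
\]
\[
\wt(2YZ)=B+C,\quad \wt(2rY)=B.
\]
This gives $\nu(u\tau(u))=(A,A+C,A+B)$ and $\nu(u\rho(u))=(B+C,B,A+B)$.

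The only point needing care is that these are exact equalities rather than lower bounds. This rests entirely on the multiplicativity in Step 2, i.e. on the absence of zero divisors among leading forms. Each component of the new word is a single product of old components rather than a sum, so no cancellation between different terms can occur.
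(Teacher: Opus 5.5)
Your proposal is correct and follows essentially the same route as the paper. You use the same decomposition $r(w+w')+w\times w'$ with the sign patterns from Lemma~\ref{lem:signs}, and you read off the valuations using the fact that $r$ has constant term $1$ and that there are no zero divisors. Your Step~2, which derives $\wt(fg)=\wt(f)+\wt(g)$ from the product of the lowest weighted-homogeneous parts in $\R[s,t]$, just spells out the paper's one-line integral-domain remark.
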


\begin{proof}
Write $v=(X,Y,Z)$ for the imaginary vector of $u$.  The imaginary part of a product $(r+v)(r+v')$ is
\[
  r(v+v')+v\times v'.
\]
For $\tau(u)$ we have $v'=(X,-Y,-Z)$.  Hence
\[
  r(v+v')=(2rX,0,0),
  \qquad
  v\times v'=(0,2XZ,-2XY).
\]
This proves the formula for $u\tau(u)$.  The formula for $u\rho(u)$ follows similarly from $v'=(-X,Y,-Z)$:
\[
  r(v+v')=(0,2rY,0),
  \qquad
  v\times v'=(-2YZ,0,2XY).
\]
Since $r$ has constant term $1$ and the coefficient ring is an integral
domain, the weighted valuations are as claimed.
\end{proof}

\section{The seed word}\label{sec:seed}

The seed is
\[
  u_0=p^2q^{-1}p^{-1}.
\]
It is chosen so that its three component valuations provide the ``initial data'' for the recursion.

\begin{lemma}\label{lem:seed}
For
\[
  p=c+s\ii+t\jj,
  \qquad q=c+s\ii-t\jj,
\]
one has
\[
  u_0=r_0+X_0\ii+Y_0\jj+Z_0\kk
\]
with
\begin{align*}
  X_0&=8cst^2,\\
  Y_0&=2ct(1-4s^2),\\
  Z_0&=2st(3-4s^2-4t^2).
\end{align*}
In particular
\[
  \nu(u_0)=(5,2,3).
\]
\end{lemma}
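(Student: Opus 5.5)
The plan is a direct quaternion computation, organized so that the answer appears as a conjugation rather than a product of four factors. I will write $P=s\ii+t\jj$ and $Q=s\ii-t\jj$, so that $p=c+P$ and $q=c+Q$. Since these are unit quaternions, $q^{-1}=c-Q$ and $p^{-1}=c-P$. I will then group the seed as
\[
  u_0=p\,(pq^{-1})\,p^{-1},
\]
so that $u_0$ is the conjugate by $p$ of the simpler element $pq^{-1}$. Conjugation preserves the real part and rotates the imaginary part, so it suffices to compute $pq^{-1}$ and then apply one rotation.

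\textbf{Step 1.} Using the rule $UV=-U\cdot V+U\times V$ for imaginary quaternions, I will expand
\[
  (c+P)(c-Q)=c^2+P\cdot Q+c(P-Q)-P\times Q.
\]
The ingredients are
\[
  P\cdot Q=s^2-t^2,\qquad P-Q=2t\jj,\qquad P\times Q=-2st\kk.
\]
Hence
\[
  pq^{-1}=(1-2t^2)+2ct\,\jj+2st\,\kk.
\]
I write $w=(0,2ct,2st)$ for its imaginary part.

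\textbf{Step 2.} I will apply the standard formula for the action of $p$ on imaginary quaternions,
\[
  pwp^{-1}=(c^2-|P|^2)\,w+2(P\cdot w)\,P+2c\,P\times w .
\]
The needed quantities are
\[
  c^2-|P|^2=1-2s^2-2t^2,\qquad P\cdot w=2ct^2,\qquad P\times w=(2st^2,\,-2s^2t,\,2cst).
\]
Reading off the components of $pwp^{-1}$ gives:
\begin{itemize}
  \item $X_0=4cst^2+4cst^2=8cst^2$;
  \item $Y_0=2ct(1-2s^2-2t^2)+4ct^3-4cs^2t=2ct(1-4s^2)$;
  \item $Z_0=2st(1-2s^2-2t^2)+4c^2st$, which becomes $2st(3-4s^2-4t^2)$ after substituting $c^2=1-s^2-t^2$.
\end{itemize}
The real part $r_0=1-2t^2$ is not needed. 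The only real care required is in the signs of the cross products and in using the substitution $c^2=1-s^2-t^2$ at the end; I expect no genuine obstacle.

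\textbf{Step 3.} For the weighted valuations, I will note that $c=\sqrt{1-s^2-t^2}$ is a power series in $s^2,t^2$ with constant term $1$. Likewise $1-4s^2$ and $3-4s^2-4t^2$ have nonzero constant terms. Since $\R[[s,t]]$ is an integral domain and these factors are units, multiplying by them does not change the lowest-weight monomial. The lowest monomials are therefore $st^2$, $t$ and $st$, of weights $1+4=5$, $2$ and $1+2=3$ respectively. This gives $\nu(u_0)=(5,2,3)$.
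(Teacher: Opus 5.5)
Your proposal is correct: the grouping $u_0=p\,(pq^{-1})\,p^{-1}$, the value $pq^{-1}=(1-2t^2)+2ct\,\jj+2st\,\kk$, the rotation-formula ingredients, the resulting $X_0,Y_0,Z_0$, and the weights $(5,2,3)$ all check out. This is essentially the paper's argument, which is a direct quaternion multiplication followed by reading off the lowest-weight monomials $st^2,t,st$. Your conjugation grouping is just a tidier way to organize that computation, and it makes the real part $r_0=1-2t^2$ immediate.
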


\begin{proof}
Using $p^{-1}=c-s\ii-t\jj$, $q^{-1}=c-s\ii+t\jj$, and
$\ii\jj=\kk$, direct multiplication gives the displayed expressions.  Their
initial weighted monomials are respectively $st^2,t,st$, of weights $5,2,3$.
\end{proof}

\section{The recursion}\label{sec:recursion}

Define
\[
  u_{n+1}=\begin{cases}
  u_n\tau(u_n),& n\text{ even},\\
  u_n\rho(u_n),& n\text{ odd}.
  \end{cases}
\]
Let
\[
  d_{-1}=1,
  \qquad d_0=2,
  \qquad d_1=5,
\]
and for $m\ge -1$ define
\[
  d_{m+3}=d_{m+2}+d_{m+1}+d_m.
\]

\begin{theorem}\label{thm:local}
Let $x_0>1$ be the real root of
\[
  x^3=x^2+x+1.
\]
There exists an explicit sequence of non-trivial words
$w_n\in F_2=\langle a,b\rangle$ such that
\[
  \len{w_n}\le C2^n
\]
and such that the local expansion of $w_n$ at the identity in $\SpOne\times\SpOne$ has no non-constant term before degree $d_n$, where
\[
  d_0=2,\qquad d_1=5,\qquad d_2=8,
\]
and
\[
  d_{n+3}=d_{n+2}+d_{n+1}+d_n.
\]
In particular,
\[
  d_n\ge c\,\len{w_n}^{\delta},
  \qquad \delta=\log_2x_0.
\]
\end{theorem}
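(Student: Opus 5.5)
The plan is to track the triple $\nu(u_n)$ through the recursion using Lemma~\ref{lem:multiplication}, starting from $\nu(u_0)=(5,2,3)$ given by Lemma~\ref{lem:seed}. Applying the $\tau$-rule at even steps and the $\rho$-rule at odd steps gives
\[
\nu(u_1)=(5,8,7),\quad \nu(u_2)=(15,8,13),\quad \nu(u_3)=(15,28,23),\quad \nu(u_4)=(51,28,43),\dots
\]
We first verify by induction on $n$ the following. After an even-to-odd step ($\tau$), the two large entries are sums of the previous ones and the first entry is unchanged. After an odd-to-even step ($\rho$), the second entry is unchanged. Hence $\min\nu(u_n)$ is always the repeated entry. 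Writing $\nu(u_n)$ in terms of three consecutive terms of a tribonacci-type sequence, we then show that $\min\nu(u_n)\ge d_n$ with $d_{n+3}=d_{n+2}+d_{n+1}+d_n$.

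The cleanest route is to posit the ansatz
\[
\nu(u_n)=(e_n,e_{n+1},e_{n}+e_{n-1}) \text{ or its } \rho\text{-analogue},
\]
and check that the two rules of Lemma~\ref{lem:multiplication} reproduce it with the index shifted. The minima are $2,5,8,15,28,51,\dots$. Note that $d_2=8$ is stated as the start value even though the displayed $d_{-1}=1$ version would give $d_2=8$ as well, so the two normalisations agree. Since all entries grow and the recursion for the minimum reads $m_{n+3}=m_{n+2}+m_{n+1}+m_n$, this follows by a short induction once the ansatz is confirmed on $n=0,1,2$. I expect this bookkeeping to be the main obstacle: one must make sure the minimum entry really is the one obeying the three-term recursion, and that no entry ever drops below it after alternation. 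If an entry fell below, one would simply prove the inequality $\min\nu(u_n)\ge d_n$ with $d_n$ as a lower envelope, which suffices.

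Next, we set $w_n(a,b)=u_n(a,bab^{-1})$. The word $u_n$ is a product of $2^n$ copies of images of $u_0$ under the automorphisms $\tau,\rho$, so $|u_n|\le 4\cdot 2^n$ and $|w_n|\le 12\cdot 2^n$. Non-triviality of $w_n$ follows because $a$ and $bab^{-1}$ freely generate a free subgroup of $F_2$. It therefore suffices that $u_n\ne1$ in $\langle p,q\rangle$. This holds because $\nu(u_n)$ has finite entries, so $u_n$ evaluates to a non-identity power series; a trivial word would give $X=Y=Z=0$, i.e.\ $\wt=\infty$.

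Weighted order $d_n$ gives, by \eqref{eq:weighted-to-ordinary}, the expansion $w_n(e^A,e^B)=1+O(\varepsilon^{d_n})$, so all non-constant Taylor terms of degree $<d_n$ vanish. Finally, $d_n\asymp x_0^n$ because the characteristic polynomial of the recursion is $x^3-x^2-x-1$, which has a simple dominant root $x_0$ while the complex roots have modulus $<1$; positivity of the initial data gives $d_n\ge c' x_0^n$. Combining with $|w_n|\le C2^n$ yields
\[
d_n\ge c'x_0^n=c'(2^n)^{\delta}\ge c\,|w_n|^{\delta}.
\]
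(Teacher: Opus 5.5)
Your proposal is correct and follows the paper's proof essentially step by step. Your ansatz and induction on $\nu(u_n)$ via Lemma~\ref{lem:multiplication} is Proposition~\ref{prop:components}, and the length count $\len{w_n}\le 12\cdot 2^n$, the appeal to \eqref{eq:weighted-to-ordinary}, and the growth bound $d_n\ge c\,x_0^n$ are the same as the paper's. The only small difference is in non-triviality: you use that $a$ and $bab^{-1}$ freely generate a rank-two subgroup of $F_2$, so $u_n\neq 1$ suffices. The paper instead observes that every conjugate pair in $\SpOne$ has the form $(a,bab^{-1})$, so the word map of $w_n$ is not identically $1$; either argument works.
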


\begin{proposition}\label{prop:components}
For every $k\ge0$,
\[
  \nu(u_{2k})=\bigl(d_{2k+1},\ d_{2k},\ d_{2k}+d_{2k-1}\bigr),
\]
and
\[
  \nu(u_{2k+1})=\bigl(d_{2k+1},\ d_{2k+2},\ d_{2k+1}+d_{2k}\bigr).
\]
Therefore the minimum of the three weighted component valuations of $u_n$ is $d_n$.
\end{proposition}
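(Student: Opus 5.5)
Induction on $k$, using Lemma~\ref{lem:seed} as the base case and Lemma~\ref{lem:multiplication} as the inductive engine, with $d_{-1}=1,d_0=2,d_1=5$ and $d_{m+3}=d_{m+2}+d_{m+1}+d_m$ (so $d_2=8$, $d_3=15$, \dots).

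\emph{Base case $k=0$.} The claim reads $\nu(u_0)=(d_1,d_0,d_0+d_{-1})=(5,2,3)$, which is exactly Lemma~\ref{lem:seed}.

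\emph{Even to odd.} Assume $\nu(u_{2k})=(A,B,C)$ with $A=d_{2k+1}$, $B=d_{2k}$, $C=d_{2k}+d_{2k-1}$. Since $u_{2k+1}=u_{2k}\tau(u_{2k})$, Lemma~\ref{lem:multiplication} gives
\[
\nu(u_{2k+1})=(A,A+C,A+B)=\bigl(d_{2k+1},\ d_{2k+1}+d_{2k}+d_{2k-1},\ d_{2k+1}+d_{2k}\bigr).
\]
The recursion identifies the middle entry as $d_{2k+2}$, which is the asserted formula.

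\emph{Odd to even.} Write $\nu(u_{2k+1})=(A,B,C)=(d_{2k+1},d_{2k+2},d_{2k+1}+d_{2k})$. Since $u_{2k+2}=u_{2k+1}\rho(u_{2k+1})$, Lemma~\ref{lem:multiplication} gives
\[
\nu(u_{2k+2})=(B+C,B,A+B)=\bigl(d_{2k+2}+d_{2k+1}+d_{2k},\ d_{2k+2},\ d_{2k+2}+d_{2k+1}\bigr).
\]
The first entry is $d_{2k+3}$, so this is the claimed formula with $k$ replaced by $k+1$. This closes the induction.

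\emph{Minimum.} It remains to show the minimum entry is $d_n$. First check that $d_m$ is positive and strictly increasing for $m\ge -1$: we have $1<2<5$, and from then on $d_{m+3}>d_{m+2}$ because $d_{m+1}+d_m>0$. For $u_{2k}$, the entries are $d_{2k+1}>d_{2k}$ and $d_{2k}+d_{2k-1}>d_{2k}$, so the minimum is $d_{2k}$. For $u_{2k+1}$, the entries are $d_{2k+1}$, then $d_{2k+2}>d_{2k+1}$, then $d_{2k+1}+d_{2k}>d_{2k+1}$, so the minimum is $d_{2k+1}$.

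\emph{Main obstacle.} The only delicate point is the validity of Lemma~\ref{lem:multiplication} at each step. It requires that every component of $u_n$ is a nonzero series, so that products have exactly additive valuations in the integral domain $\R[[s,t]]$, and that $r$ has constant term $1$. Both are preserved along the induction. The valuations stay finite, since they are sums of finite numbers, so the components remain nonzero. The constant term of $r$ stays $1$ because each $u_n$ reduces to $1$ at $s=t=0$.
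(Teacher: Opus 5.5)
Your proof is correct and follows the paper's argument essentially verbatim: the base case is Lemma~\ref{lem:seed}, the two alternating inductive steps apply Lemma~\ref{lem:multiplication} and identify the new entry via $d_{m+3}=d_{m+2}+d_{m+1}+d_m$, and the minimum follows from monotonicity of $d_m$. Your extra remarks are accurate justifications of points the paper leaves implicit: you need $d_{-1}>0$ for the even case at $k=0$, and $r_n$ has constant term $1$ because $u_n=1$ at $s=t=0$.
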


\begin{proof}
The formula for $k=0$ in the even case is exactly Lemma \ref{lem:seed}:
\[
  \nu(u_0)=(5,2,3)=(d_1,d_0,d_0+d_{-1}).
\]
Assume the even formula for $u_{2k}$.  Applying Lemma \ref{lem:multiplication} to $u_{2k}\tau(u_{2k})$ gives
\[
  \nu(u_{2k+1})=
  \bigl(d_{2k+1},\ d_{2k+1}+d_{2k}+d_{2k-1},\ d_{2k+1}+d_{2k}\bigr).
\]
By the recurrence,
\[
  d_{2k+2}=d_{2k+1}+d_{2k}+d_{2k-1},
\]
so the odd formula follows.

Conversely, assume the odd formula for $u_{2k+1}$.  Applying Lemma \ref{lem:multiplication} to $u_{2k+1}\rho(u_{2k+1})$ gives
\[
  \nu(u_{2k+2})=
  \bigl(d_{2k+2}+d_{2k+1}+d_{2k},\ d_{2k+2},\ d_{2k+2}+d_{2k+1}\bigr).
\]
Again by the recurrence,
\[
  d_{2k+3}=d_{2k+2}+d_{2k+1}+d_{2k},
\]
which proves the even formula at $k+1$.

Since the sequence $d_m$ is strictly increasing for $m\ge0$, the minimum of the three entries in $\nu(u_n)$ is $d_n$.
\end{proof}

\begin{proof}[Proof of Theorem \ref{thm:local}]
Let
\[
  w_n(a,b)=u_n(a,bab^{-1}).
\]
Equation~\eqref{eq:weighted-to-ordinary} and
Proposition~\ref{prop:components} show that the Taylor expansion of $w_n$ on
$\SpOne\times\SpOne$ has no non-constant term before degree $d_n$.

The same proposition shows that $u_n$ has a nonzero imaginary component on
the normal-form family.  Every pair in that family consists of conjugate unit
quaternions, so it has the form $p=a$, $q=bab^{-1}$ for some $a,b\in\SpOne$.
Thus the word map $w_n$ is not identically $1$, and hence $w_n\ne1$ in $F_2$.

The length satisfies
\[
  \len{u_n}_{p,q}\le 2^n\len{u_0}=4\cdot 2^n,
\]
because $\tau$ and $\rho$ preserve length and each step uses two copies of the previous word.  Substituting $p=a$ and $q=bab^{-1}$ gives
\[
  \len{w_n}_{a,b}\le 3\len{u_n}_{p,q}\le 12\cdot 2^n.
\]
Finally, $d_n$ grows like $x_{0}^n$ where $x_0$ is the real root of
\[
  x^3 = x^2 + x + 1.
\]
Thus
\[
  d_n\ge cx_0^n\ge c'\len{w_n}^{\log_2x_0}.
\]
This proves the theorem.
\end{proof}

\section{From local estimates to global almost laws}\label{sec:global}

\begin{lemma}[Uniform local estimate]\label{lem:uniform-local}
Let $w_n(a,b)=u_n(a,bab^{-1})$ be the local words of Theorem \ref{thm:local}.  There are constants $\eta_0>0$ and $c_0>0$ such that, whenever $x,y\in \SO(3)$ satisfy
\[
  \|x-I\|\le \eta_0,
  \qquad
  \|y-I\|\le \eta_0,
\]
one has
\[
  \|w_n(x,y)-I\|\le \exp(-c_0 d_n),
\]
where $d_n$ is the sequence appearing in Theorem \ref{thm:local}.
\end{lemma}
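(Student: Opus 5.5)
The idea is to turn the formal valuation count of Proposition~\ref{prop:components} into a quantitative estimate that is uniform in $n$. I will not use Taylor remainders, because their constants depend on $n$. Instead I run the exact recursion of Lemma~\ref{lem:multiplication} on actual real numbers.

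\textbf{Setup.} Lift $x,y$ to $a,b\in\SpOne$ near $1$ and pass to the normal form \eqref{eq:pq-normal} for $p=a$, $q=bab^{-1}$. By Section~\ref{sec:normal-form}, there are constants $\eta_0$ and $K$ such that $\|x-I\|,\|y-I\|\le\eta_0$ forces $|s|\le\rho$ and $|t|\le K\rho^2$, where $\rho$ is a small parameter comparable to $\eta_0$. Absorbing $K$, I take $|s|\le\rho$ and $|t|\le\rho^2$ for every monomial estimate. Write $u_n=r_n+X_n\ii+Y_n\jj+Z_n\kk$, and let $e_n=(e^X_n,e^Y_n,e^Z_n)=\nu(u_n)$ be the weights from Proposition~\ref{prop:components}. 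Since $u_n$ is a unit quaternion, $|r_n|\le1$. The target is an inequality $|X_n|,|Y_n|,|Z_n|\le C_1(\Lambda\rho)^{\text{weight}}$ with $C_1,\Lambda$ independent of $n$. Choosing $\rho$ (hence $\eta_0$) so small that $\Lambda\rho\le e^{-1}$ then gives $|\operatorname{Im} u_n|\le 3C_1e^{-d_n}$. Because the $\SO(3)$ image is $v\mapsto u v u^{-1}$, and its distance to $I$ is $O(|\operatorname{Im}u_n|)$ for the appropriate lift $\pm u_n$, this yields the lemma.

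\textbf{Key step: bookkeeping of constants.} The naive bound $|\text{comp}|\le K_n\rho^{e}$ does not close. The linear terms $2rX$ (in the $\tau$ step) and $2rY$ (in the $\rho$ step) genuinely double the component. The product terms give $K_{n+1}\approx2K_n^2$, and this doubly exponential growth would beat $\rho^{d_n}\sim\rho^{x_0^n}$, since $2>x_0$. This is the main obstacle. The remedy is the observation that a component produced by a linear term at one step is produced by a product term at the next. For $\tau$ steps $X$ is linear while $Y,Z$ are products; for $\rho$ steps $Y$ is linear while $X,Z$ are products. So I take logarithmic excesses $\ell=\log(|\text{comp}|/\rho^{e})$ and prove by induction the invariant
\[
  \ell\le\lambda e-\mu \ \text{for components just produced by a product},\qquad
  \ell\le\lambda e-\mu+\log2 \ \text{for the one just produced linearly},
\]
with $\mu\ge3\log2$ and $\lambda$ large.

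\textbf{Checking the invariant.} A linear step keeps $e$ and adds at most $\log2$. It is applied only to a component that was product-fresh, because of the alternation, so the invariant holds. A product step gives at most $\ell+\ell'+\log 2$. Even when one factor is linear-fresh, this is bounded by
\[
  \lambda(e+e')-2\mu+2\log2+\log2\le\lambda(e+e')-\mu,
\]
and the new weight is exactly $e+e'$ by Lemma~\ref{lem:multiplication}. The base case is Lemma~\ref{lem:seed}. Its explicit formulas give $|X_0|\le8\rho^5$, $|Y_0|\le4\rho^2$ and $|Z_0|\le10\rho^3$, so choosing $\lambda$ large makes all three satisfy $\ell\le\lambda e-\mu$.

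\textbf{Conclusion.} The invariant gives $|\text{comp}|\le 2e^{-\mu}(e^{\lambda}\rho)^{e}$. The minimal weight is $d_n$ by Proposition~\ref{prop:components}. Shrinking $\eta_0$ so that $e^{\lambda}\rho\le e^{-1}$ gives $\|w_n(x,y)-I\|\le C e^{-d_n}$, which is at most $\exp(-c_0d_n)$ after adjusting $c_0$ to absorb $C$.
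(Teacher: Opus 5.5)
Your proposal is correct and is essentially the paper's argument. Both run the exact recursion of Lemma~\ref{lem:multiplication} on actual values in the box $|s|\le\theta$, $|t|\le\theta^2$, using $|r_n|\le 1$. Both bound each component by a constant that grows at most exponentially in its weight, with the alternation of $\tau$ and $\rho$ supplying the slack that absorbs the factor $2$ in the linear terms.

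The differences are only in packaging. Your two-state ``freshness'' invariant, with slack $\mu\ge 3\log 2$, plays the role of the paper's auxiliary exponent triple $\ell_n\le\nu(u_n)$ (error vector $\ge(4,1,2)$). In the endgame you use $\|R_u-I\|=2|\operatorname{Im}u|$, which holds for either lift, and this sidesteps the paper's connectedness argument for the sign of $r_n$.
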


\begin{proof}
We first prove a uniform estimate in the quaternionic normal form
\[
  p=c+s\ii+t\jj,\qquad q=c+s\ii-t\jj,
  \qquad c=\sqrt{1-s^2-t^2}.
\]
Write
\[
  u_n=r_n+X_n\ii+Y_n\jj+Z_n\kk,
  \qquad
  \nu(u_n)=(A_n,B_n,C_n).
\]
By Proposition \ref{prop:components},
\[
  \min(A_n,B_n,C_n)=d_n.
\]

Define auxiliary triples
\[
  \ell_n=(\alpha_n,\beta_n,\gamma_n)
\]
by
\[
  \ell_0=(1,1,1),
\]
and, according as $n$ is even or odd,
\[
  \ell_{n+1}
  =
  (\alpha_n+1,\alpha_n+\gamma_n+1,\alpha_n+\beta_n+1),
\]
or
\[
  \ell_{n+1}
  =
  (\beta_n+\gamma_n+1,\beta_n+1,\alpha_n+\beta_n+1).
\]
A direct induction from $\nu(u_0)=(5,2,3)$ shows that
\[
  \ell_n\le \nu(u_n)
\]
componentwise.  Indeed, subtracting these recurrences from the valuation recurrences in
Lemma \ref{lem:multiplication}, the error vector transforms at even steps by
\[
  (\Delta_X,\Delta_Y,\Delta_Z)
  \mapsto
  (\Delta_X-1,\Delta_X+\Delta_Z-1,\Delta_X+\Delta_Y-1),
\]
and at odd steps by
\[
  (\Delta_X,\Delta_Y,\Delta_Z)
  \mapsto
  (\Delta_Y+\Delta_Z-1,\Delta_Y-1,\Delta_X+\Delta_Y-1).
\]
If $n$ is even, the invariant
\[
  (\Delta_X,\Delta_Y,\Delta_Z)\ge(4,1,2)
\]
is carried to $(3,5,4)$ or larger; at the following odd step this is
carried to $(8,4,7)\ge(4,1,2)$.  This proves the claim by induction.

We claim that there is an absolute constant $K$, independent of $n$, such that for
\[
  |s|\le \theta,\qquad |t|\le \theta^2,\qquad \theta\le \frac12,
\]
one has
\[
  |X_n|\le K^{\alpha_n}\theta^{A_n},\qquad
  |Y_n|\le K^{\beta_n}\theta^{B_n},\qquad
  |Z_n|\le K^{\gamma_n}\theta^{C_n}.
\]
For $n=0$ this is Lemma~\ref{lem:seed}, after fixing $K$ large enough.
Lemma~\ref{lem:multiplication} gives the induction step: since
$u_n\in\SpOne$, we have $|r_n|\le1$, and each factor $2$ is absorbed by
the corresponding ``$+1$'' in the definition of $\ell_{n+1}$.

Since $\ell_n\le \nu(u_n)$ componentwise, we obtain
\[
  |X_n|\le (K\theta)^{A_n},\qquad
  |Y_n|\le (K\theta)^{B_n},\qquad
  |Z_n|\le (K\theta)^{C_n}.
\]
Hence
\[
  |X_n|+|Y_n|+|Z_n|
  \le 3(K\theta)^{d_n}.
\]
Choose $\theta_0\le1/2$ so small that $K\theta_0<1/4$.  Since
$d_n\ge2$, the fixed factor $3$ can be absorbed into the exponent, and
there is $c_1>0$ such that
\[
  |X_n|+|Y_n|+|Z_n|
  \le \exp(-c_1d_n)
\]
for all $\theta\le \theta_0$ and all $n$.  Shrink $\theta_0$ so that the
left-hand side is also less than $1/2$.  The parameter box is connected,
$u_n(0,0)=1$, and the scalar part cannot change sign without passing through
zero.  Hence, uniformly in $n$,
\[
  r_n=\sqrt{1-X_n^2-Y_n^2-Z_n^2},
\]
and therefore
\[
  |r_n-1|\le 2(X_n^2+Y_n^2+Z_n^2).
\]
After decreasing $c_1$ once more, we get
\[
  |u_n(p,q)-1|\le \exp(-c_1d_n)
\]
throughout the normal-form region
\[
  |s|\le \theta,\qquad |t|\le \theta^2,\qquad \theta\le\theta_0.
\]

Now let $x,y\in \SO(3)$ be close to the identity and let $a,b\in\SpOne$
be their near-identity lifts.  Section~\ref{sec:normal-form}, together with
the local bi-Lipschitz equivalence of the two models, gives constants
$C_s,C_t$ such that the normal-form parameters of
$p=a$, $q=bab^{-1}$ satisfy
\[
  |s|\le C_s\eta,\qquad |t|\le C_t\eta^2,
  \qquad
  \eta=\max\{\|x-I\|,\|y-I\|\}.
\]
Choose $D\ge\max\{C_s,\sqrt{C_t},1\}$ and put $\theta=D\eta$.
For $\eta_0\le\theta_0/D$, the normal-form estimate applies.  The covering
map $\SpOne\to \SO(3)$ is locally Lipschitz, so its fixed multiplicative
constant can be absorbed by decreasing the exponential constant.  Thus
\[
  \|w_n(x,y)-I\|\le \exp(-c_0d_n)
\]
for some $c_0>0$ independent of $n,x,y$.
\end{proof}

\begin{proof}[Proof of Theorem \ref{thm:global}]
By \cite[Proposition~3.7]{Thom2010}, choose a fixed non-trivial
$\eta_0$-almost law $v\in F_2$ for $\SO(3)$, where $\eta_0$ is as in
Lemma~\ref{lem:uniform-local}.  Thus
\[
  \|v(A,B)-I\|\le \eta_0
  \qquad\text{for all }A,B\in \SO(3).
\]
We evaluate $v$ in two disjoint free factors.  Let
\[
  x_1=b,\qquad x_2=aba^{-1},\qquad
  y_1=a^2ba^{-2},\qquad y_2=a^3ba^{-3}.
\]
Schreier's method shows that the conjugates $a^kba^{-k}$ form a free basis
for the kernel of
\[
  F_2\to\mathbb Z,\qquad a\mapsto 1,\quad b\mapsto 0.
\]
In particular, if $H=\langle x_1,x_2,y_1,y_2\rangle$, then
\[
  H=\langle x_1,x_2\rangle *\langle y_1,y_2\rangle.
\]
Put
\[
  X(a,b)=v(x_1,x_2),\qquad Y(a,b)=v(y_1,y_2),
\]
and define
\[
  W_n(a,b)=w_n(X(a,b),Y(a,b)).
\]
For every $A,B\in \SO(3)$, the two arguments fed into $w_n$ are both $\eta_0$-close to the identity, because $v$ is an $\eta_0$-almost law and $x_i(A,B),y_i(A,B)$ are rotations.  Lemma \ref{lem:uniform-local} gives
\[
  \|W_n(A,B)-I\|\le \exp(-c_0 d_n).
\]

The substitution has fixed cost: if $m=\len{v}$, then
\[
  \len{X},\len{Y}\le 7m,
  \qquad
  \len{W_n}\le 7m\,\len{w_n}.
\]
The two substitutions of $F_2$ into the free factors of $H$ are injective,
so $X,Y\ne1$.  The free-product normal form then shows that $X,Y$ freely
generate a rank-two free subgroup.  Since $w_n\ne1$, it follows that
\[
  W_n=w_n(X,Y)\neq 1.
\]

By Theorem \ref{thm:local},
\[
  d_n\ge c\len{w_n}^{\delta}.
\]
Since
\[
  \len{W_n}\le 7m\,\len{w_n},
\]
we get
\[
  d_n\ge c(7m)^{-\delta}\len{W_n}^{\delta}.
\]
Therefore
\[
  \|W_n(A,B)-I\|
  \le
  \exp\bigl(-c'\len{W_n}^{\delta}\bigr)
\]
for a constant $c'>0$ independent of $n,A,B$.  This proves the stated
quantitative almost-law bound, as well as $|W_n|\ll2^n$.

It remains to justify the estimate for $L_{\SO(3)}(\varepsilon)$.  Put
$R=\log(1/\varepsilon)$ and choose $n$ minimal with $c_0d_n\ge R$.  Since
$d_n\le3d_{n-1}$ for $n\ge1$, minimality gives $d_n\ll R$.  Moreover,
$d_n\asymp x_0^n$, and hence
\[
  |W_n|\ll2^n
  \ll d_n^{\log_{x_0}2}
  \ll R^{1/\delta}.
\]
This completes the proof.
\end{proof}

\section{Application to single-qubit compilation}

For a word $\omega\in F_2$, let
$\operatorname{ccan}_{\SU(2)}(\omega)$ be its \emph{conjugate cancellation
degree} in the sense of Kuperberg: the largest $r$ such that
\[
  \omega(e^{tA},e^{tB})=\exp(O(t^r))
\]
whenever $A,B\in\mathfrak{su}(2)$ are conjugate.  A fixed word of length
$L$ and conjugate cancellation degree $r\ge2$ can be used in Kuperberg's
construction with every word-length exponent
$\alpha>\max\{1,\log_r L\}$ \cite[Sec.~4.2]{Kuperberg2023}.

\begin{corollary}[Single-qubit compilation]\label{cor:kuperberg}
Let $\mathcal A=\mathcal A^{-1}\subset \SU(2)$ be finite, and suppose that
$\langle\mathcal A\rangle$ is dense in $\SU(2)$.  For every
\[
  \kappa>\log_{x_0}2=\frac1\delta=1.137466951\ldots,
\]
there is a polynomial-time classical algorithm which, given $g\in \SU(2)$ and
$N\in\mathbb N$, produces an $\mathcal A$-word $V$ such that
\[
  d(V,g)<2^{-N},\qquad |V|=O(N^\kappa).
\]
Here $d$ is any fixed bi-invariant Riemannian metric on $\SU(2)$.
Equivalently, precision $\varepsilon$ is achieved with
$O((\log(1/\varepsilon))^\kappa)$ gates.  Thus, for single-qubit gates, the
threshold $\log_\varphi2=1.440420\ldots$ in \cite{Kuperberg2023} can be
replaced by $\log_{x_0}2$.  The same conclusion holds projectively for
$\mathrm{PU}(2)\simeq \SO(3)$.
\end{corollary}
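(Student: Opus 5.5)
The plan is to feed a single word from the paper's construction into Kuperberg's framework as the "fixed word" of \cite[Sec.~4.2]{Kuperberg2023}, and to choose its depth large enough that the exponent threshold $\max\{1,\log_r L\}$ drops below any prescribed $\kappa>1/\delta$. The natural candidate is the local word $w_n(a,b)=u_n(a,bab^{-1})$ of Theorem~\ref{thm:local}, not the global word $W_n$. Kuperberg only needs cancellation for conjugate infinitesimal generators, and the substitution $p=a$, $q=bab^{-1}$ is already adapted to conjugate pairs.

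The first step is to bound the conjugate cancellation degree of a closely related word from below by $d_n$. Take $A,B\in\mathfrak{su}(2)$ conjugate and set $p=e^{tA}$, $q=e^{tB}$. These are conjugate unit quaternions with $\|\operatorname{Im}\|=O(t)$, so Section~\ref{sec:normal-form} puts them in the normal form \eqref{eq:pq-normal} with $|s|,|t_{\mathrm{nf}}|=O(t)$.

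Here is a subtlety. Kuperberg's definition evaluates $\omega$ directly at a conjugate pair. The relevant word is therefore $u_n(p,q)$ itself, viewed as an element of $F_2$ in the letters $p,q$, rather than $w_n(a,b)$. For this word the separation parameter is only $O(t)$, not $O(t^2)$, so the weighted valuation gives only $|X_n|,|Y_n|,|Z_n|=O(t^{\min\nu})$ with unweighted degrees.

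To keep the weight-$2$ gain, I would first check whether Kuperberg's ccan allows generic conjugate $A,B$ with small separation. If it does not, the fallback is a degree count. Weight is at least half the ordinary degree, so $\operatorname{ccan}(u_n)\ge \lceil d_n/2\rceil$. Running Lemma~\ref{lem:multiplication} with ordinary degrees, the seed $u_0$ has ordinary valuations $(3,1,2)$, and the same Fibonacci-tribonacci recursion $\nu\mapsto(A,A+C,A+B)$, $(B+C,B,A+B)$ applies verbatim. So the ordinary degree $r_n$ also satisfies $r_{n+3}=r_{n+2}+r_{n+1}+r_n$ and hence $r_n\asymp x_0^n$. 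This is the point I expect to be the main obstacle: matching Kuperberg's precise definition and confirming that the ordinary-degree valuations follow the same linear recursion. The integral-domain argument of Lemma~\ref{lem:multiplication} applies equally to ordinary $t$-adic valuation, which suggests it does.

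With $r=r_n\ge c x_0^n$ and $L=|u_n|\le 4\cdot2^n$, we get
\[
  \log_r L=\frac{n\log 2+O(1)}{n\log x_0+O(1)}\longrightarrow \log_{x_0}2=\frac1\delta .
\]
Given $\kappa>1/\delta$, fix $n$ with $\log_{r_n}L_n<\kappa$ and $r_n\ge2$, and apply \cite[Sec.~4.2]{Kuperberg2023} with $\alpha=\kappa$. That section supplies the polynomial-time algorithm and the bound $|V|=O(N^\kappa)$ for any finite symmetric $\mathcal A$ generating a dense subgroup. The projective statement for $\mathrm{PU}(2)\simeq\SO(3)$ follows by pushing $V$ through the double cover, which is Lipschitz and so only changes constants in the precision.
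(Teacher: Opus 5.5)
Your argument is correct, but it feeds a different word into Kuperberg's construction than the paper does.

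The paper takes $w_n=u_n(a,bab^{-1})$ itself. Theorem~\ref{thm:local} says the Taylor expansion of $w_n$ vanishes below degree $d_n$ at \emph{all} pairs $(e^{tA},e^{tB})$ near the identity, hence in particular at conjugate ones. So $\operatorname{ccan}(w_n)\ge d_n$ is immediate, with finiteness quoted from \cite[Thm.~5.4]{Kuperberg2023}. Your reason for abandoning $w_n$ is therefore unfounded. Evaluating $w_n$ at a conjugate pair loses nothing, because both inputs are $O(t)$-small and the internal pair $(a,bab^{-1})$ automatically has separation $O(t^2)$.

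Your route instead inserts the conjugate pair directly as $(p,q)$. There the separation is only $O(t)$, so you replace the weighted valuation by the ordinary $(s,t)$-adic one. This works:
\begin{itemize}
\item Lemma~\ref{lem:multiplication} holds verbatim for the ordinary order, since leading forms multiply in the domain $\R[s,t]$.
\item The seed has ordinary valuations $(3,1,2)$.
\item One gets $\operatorname{ccan}(u_n)\ge r_n$ with $r_n=1,3,5,9,17,31,\dots$, again tribonacci, so $\log_{r_n}|u_n|\to\log_{x_0}2$.
\end{itemize}
The recursion is not even needed. The crude bound $r_n\ge d_n/2$ already suffices, since a constant factor in $r$ does not affect $\lim \log L_n/\log r_n$. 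Note that the correct inequality is $\alpha+\beta\ge(\alpha+2\beta)/2$: ordinary degree is at least half the weight, not the reverse as you wrote.

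Each route has its advantage. Yours avoids the substitution $q=bab^{-1}$ and uses only the conjugacy hypothesis built into $\operatorname{ccan}$. The paper's needs nothing beyond Theorem~\ref{thm:local}, and its cancellation holds for arbitrary nearby inputs, not just conjugate ones.

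You should add one point. Kuperberg's threshold is stated for the actual degree, so you need $\operatorname{ccan}(u_n)<\infty$. This follows because the components of $u_n$ are nonzero power series whose leading forms cannot vanish on the open set of normal-form parameters realized by conjugate pairs. Alternatively, cite \cite[Thm.~5.4]{Kuperberg2023}, as the paper does.
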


\begin{proof}
Set $L_n=|w_n|$ and
$r_n=\operatorname{ccan}_{\SU(2)}(w_n)$.  The local vanishing in
Theorem~\ref{thm:local} gives $r_n\ge d_n$.  Since $w_n$ is non-trivial,
$r_n<\infty$ by \cite[Thm.~5.4]{Kuperberg2023}.  Therefore
\[
  \limsup_{n\to\infty}\frac{\log L_n}{\log r_n}
  \le \frac{\log2}{\log x_0}=\log_{x_0}2.
\]
Given $\kappa>\log_{x_0}2$, choose $n$ such that
$\log_{r_n}L_n<\kappa$, and then choose
\[
  \max\{1,\log_{r_n}L_n\}<\alpha<\kappa.
\]
Using $w_n$ as the fixed cancellation word in Kuperberg's Algorithms SB and
ZB, the length recurrence in equation~(4.8) and Lemma~4.3 of
\cite{Kuperberg2023} give $|V|=O(N^\alpha)$, and hence the stated bound.
\end{proof}

\section*{Acknowledgments}
We thank Itay Glazer for useful feedback on a previous version of the manuscript.

\bibliographystyle{alpha}
\bibliography{so3_tribonacci_paper_iter5}

@article{Thom2010,
  author        = {Thom, Andreas},
  title         = {Convergent sequences in discrete groups},
  journal       = {Canadian Mathematical Bulletin},
  volume        = {56},
  number        = {2},
  pages         = {424--433},
  year          = {2013},
  eprint        = {1003.4093},
  archivePrefix = {arXiv},
  primaryClass  = {math.GR},
  doi           = {10.4153/CMB-2011-155-3}
}

@article{ChenHurtadoLee2021,
  author  = {Chen, Lvzhou and Hurtado, Sebastian and Lee, Homin},
  title   = {A height gap in {$GL_d(\overline{\mathbb Q})$} and almost laws},
  journal = {Groups Geom. Dyn.},
  volume  = {19},
  number  = {3},
  pages   = {899--912},
  year    = {2025},
  doi     = {10.4171/GGD/800}
}

@article{ElkasapyThom2013,
  author  = {Elkasapy, Abdelrhman and Thom, Andreas},
  title   = {On the length of the shortest non-trivial element in the derived and the lower central series},
  journal = {J. Group Theory},
  volume  = {18},
  number  = {5},
  pages   = {793--804},
  year    = {2015},
  doi     = {10.1515/jgth-2015-0007}
}

@misc{Elkasapy2016,
  author        = {Elkasapy, Abdelrhman},
  title         = {A new construction for the shortest non-trivial element in the lower central series},
  year          = {2016},
  eprint        = {1610.09725},
  archivePrefix = {arXiv},
  primaryClass  = {math.GR}
}

@book{Hall2015,
  author    = {Hall, Brian C.},
  title     = {Lie Groups, Lie Algebras, and Representations: An Elementary Introduction},
  edition   = {2},
  publisher = {Springer},
  series    = {Graduate Texts in Mathematics},
  volume    = {222},
  year      = {2015},
  doi       = {10.1007/978-3-319-13467-3}
}

@misc{Kuperberg2023,
  author        = {Kuperberg, Greg},
  title         = {Breaking the cubic barrier in the {Solovay--Kitaev} algorithm},
  year          = {2023},
  eprint        = {2306.13158},
  archivePrefix = {arXiv},
  primaryClass  = {quant-ph},
  doi           = {10.48550/arXiv.2306.13158},
  note          = {Revised version, 2025}
}

\end{document}